\DeclarePairedDelimiterX\Set[2]{\lbrace}{\rbrace}%
 { #1 \,\delimsize:\, #2 }
\theoremstyle{definition}
\theoremstyle{plain}
\newtheorem{tetel}{Theorem}
\newtheorem{all}[tetel]{Proposition}
\newtheorem*{all*}{Proposition}
\newtheorem*{seged*}{Segédállítás}
\newtheorem{lem}[tetel]{Lemma}
\newtheorem*{lem*}{Lemma}
\theoremstyle{definition}
\newtheorem*{defi*}{Definition}
\newtheorem{fel*}[tetel]{Feladat*}
\newtheorem*{megf*}{megfigyelés}
\theoremstyle{remark}
\newtheorem{megj}[tetel]{Remark}
\newtheorem*{megj*}{Remark}
\newcommand{\defeq}
{\stackrel{\text{def}}{=}}
\newenvironment{biz}{\par\noindent{\itshape Proof:}\ }{\rule{1.5ex}{1.5ex}}
\newenvironment{sbiz}{\par\noindent{\itshape Proof:}\ }{\newmoon}
\title{Highly connected infinite digraphs without edge-disjoint back and forth paths between a certain vertex pair}
\author{Attila Joó \thanks{MTA-ELTE Egerváry Research Group, Department of Operations Research, Eötvös Loránd University,
Budapest, Hungary.
 Email: {\tt joapaat@cs.elte.hu} }}
\date{2015}
\begin{document}
\maketitle
This is the peer reviewed version of the following article: \cite{joo2016highly}, which has been published in final form at 
\url{http://dx.doi.org/10.1002/jgt.22046}. This article may be used for non-commercial purposes in accordance with Wiley Terms and 
Conditions for Self-Archiving.

\begin{abstract}
\noindent We construct for all $ k\in \mathbb{N} $  a $ k $-edge-connected digraph $ D $ with $ s,t\in V(D) $ such 
that there are no edge-disjoint $ s \rightarrow t $ and $ t\rightarrow s $  paths. We use in our construction ``self-similar'' graphs 
which technique could be useful in other problems as well.
\end{abstract}

\section{Introduction}
\subsection{Basic notions}
In this paper by ``path'' we  mean a finite, simple, directed path. Sometimes we define a 
path 
of 
a digraph $ D=(V,A) $ by a 
finite sequence $ v_0,\dots,v_n $ of vertices of $ D $. If there are more than one edges from $ v_i $ to $ v_{i+1} $ for some $ 
i<n $, then it is not specified which edge is used by the path, so we use this kind of definition only if it does not 
matter. An $ u\rightarrow v $ path is a path  with initial vertex $ u $ and terminal vertex $ v $. Its length is the number of its 
edges. We call a digraph $ D $ connected if for all $ u,v\in V(D) $ there is a $ u\rightarrow v $ path in $ D $. For $ U\subseteq V $ 
let $ \mathsf{span}_D(U) $ be the set of those edges of $ D $ whose heads and tails are contained in  $U $ and let $ 
D[U]= (U,\mathsf{span}_D(U)) $. If it is clear what digraph we talk about, then we omit the subscripts.\\

\subsection{Background and Motivation}

R. Aharoni and C. Thomassen proved by a construction the following theorem that shows that several theorems about 
edge-connectivity properties of 
finite graphs and digraphs become ``very'' false in the infinite case.

\begin{tetel}[R. Aharoni, C. Thomassen \cite{aharoni1989infinite}]
For all $ k\in \mathbb{N}$ there is an infinite graph $ G=(V,E) $ and $ s,t\in V $ such that $ E $ has a $ k $-edge-connected 
orientation but 
for each path $ P $ between $ s $ and $ t $ the graph $ G=(V,E\setminus E(P)) $ is not connected.
\end{tetel}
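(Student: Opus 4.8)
The plan is to exhibit $G$ as a single, strongly ``self-similar'' infinite graph, and to hand over the orientation explicitly rather than appeal to an orientation theorem for infinite graphs. Two easy reductions set up the work. First, write $\partial S$ for the set of edges with exactly one endpoint in $S$; for the disconnection conclusion it is enough to arrange that every finite $s$--$t$ path $P$ satisfies $\partial S\subseteq E(P)$ for some nonempty proper $S\subsetneq V$, since then $G-E(P)$ has no edge leaving $S$ and is disconnected. Second, an orientation is $k$-edge-connected provided every \emph{finite} edge cut contains at least $k$ edges oriented in each direction --- a fact that follows by a routine compactness argument, though in our construction the $k$ edge-disjoint directed paths will instead be read off directly from the layered structure. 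So the real target is an infinite, highly edge-connected graph in which \emph{no finite $s$--$t$ path can get from $s$ to $t$ without using up every edge of some finite cut}. For $k=1$ this is cheap (a $4$-cycle with antipodal $s,t$ already disconnects under every $s$--$t$ path and has a strongly connected orientation; pad it with extra $2$-edge-connected material hung off one vertex to make it infinite); the genuine difficulty --- where no finite graph will serve and the recursion is essential --- is $k\ge 2$.

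The engine is a finite ``folded unit'' $U$. It consists of a vertex class $L$ and a vertex class $R$, together with a distinguished edge cut $\partial L$ of size roughly $2k$ whose edges alternate direction along a zig-zag, and two distinguished interfaces inside $L$ (a ``left'' one and a ``right'' one); it is designed so that \emph{any} directed path trying to pass from the left interface to the right interface is forced to oscillate between $L$ and $R$ and so to run through every edge of $\partial L$ --- \emph{unless} it escapes through a third, smaller interface. That escape leads into a copy of $G$ itself, one recursion level down; these nested copies are what supply $U$ (and the low-degree vertices of $R$) with enough edges to be $k$-edge-connected, but being copies of $G$ they again contain no finite path that crosses them without exhausting a cut. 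Then assemble $G$ by stacking units $U_1,U_2,\dots$ along a ray --- the right interface of $U_n$ glued to the left interface of $U_{n+1}$ --- placing $s$ at the far-left interface of $U_1$ and joining $t$ only to the right interfaces of the $U_n$ (and never near $s$). A finite $s$--$t$ path $P$ lies inside $U_1\cup\dots\cup U_N$ for some $N$; to reach $t$ it must cross $U_1$ from left to right, and --- directly, or after diving into finitely many nested sub-copies --- it is thereby forced to traverse some full cut $\partial S$, which then lies inside $E(P)$.

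The verification has three parts. \textbf{Connectivity of $G$:} immediate, because consecutive units share an interface and every vertex sits in a unit linked back toward $s$. \textbf{The orientation:} orient $G$ self-similarly --- each unit as a balanced digraph with exactly $k$ edges in each direction across every interface cut, each nested copy by the same rule one level deeper --- and check the cut condition of the first reduction; this works because every finite edge cut of $G$ lies in finitely many units (a routine check), where the balance is inherited, so $k$ edge-disjoint directed paths between any two vertices can be exhibited. \textbf{The disconnection property:} given a finite $s$--$t$ path $P$, induct on the largest recursion depth of a nested sub-copy that $P$ enters; the base case is the ``forced-oscillation'' lemma for a single folded unit, and the inductive step moves the analysis into the deepest sub-copy, which is another instance of $G$.

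The hard part is the design of the folded unit, where three demands must be met simultaneously: it must carry \emph{enough} edges and nesting to admit a $k$-edge-connected orientation; its critical cut must be \emph{small enough} that a long path cannot get across without exhausting it; and the recursion must keep $t$ reachable (so that $G$ is connected) while never leaving a finite path a shortcut around all of the cuts. Proving the forced-full-traversal lemma for \emph{every} finite $s$--$t$ path --- not merely the obvious zig-zag --- is the technical heart; I expect it to come down to a potential-function argument that counts signed crossings of the interface cuts, plugged into the induction on nesting depth described above.
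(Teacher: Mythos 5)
This statement is quoted in the paper only as background, with a citation to Aharoni and Thomassen; the paper contains no proof of it, so there is nothing internal to compare your argument against line by line. The closest available benchmark is the paper's proof of its own main theorem, which uses the same philosophy you propose: a self-similar digraph in which every level of the recursion contains isomorphic copies of the whole, connectivity is verified by an induction that pushes a hypothetical small cut into ever deeper copies, and the impossibility statement is proved by a minimal-counterexample argument that descends into a nested copy. Your architecture (units along a ray, nested copies of $G$ supplying the edge-connectivity, induction on the recursion depth a path reaches) is therefore entirely plausible in outline and consistent with how such constructions are actually carried out.

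However, as a proof it has a genuine gap, and it is the one you yourself flag: the ``folded unit'' is never constructed, and the forced-full-traversal lemma --- that \emph{every} finite $s$--$t$ path, not just the intended zig-zag, must exhaust some finite cut --- is not proved but only conjectured to follow from ``a potential-function argument.'' This is the entire content of the theorem; everything else in your sketch (the reduction to $\partial S\subseteq E(P)$, the compactness remark about orientations, the assembly along a ray) is routine. Note also a quantitative tension you must resolve in the design: for a $k$-edge-connected orientation to exist, every finite cut of $G$ must contain at least $2k$ edges, so the cut your path is forced to exhaust has at least $2k$ edges; since a simple path meets each vertex at most twice edge-wise, the cut must be spread over at least $k$ vertices on each side and the path must be funnelled through all of them in a prescribed order. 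Arranging this while keeping the graph $2k$-edge-connected everywhere (including at the interfaces and around $t$) is exactly the nontrivial combinatorial design, and until the unit is written down and the traversal lemma proved for arbitrary paths, the argument is a plan rather than a proof.
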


In this article we would like to introduce a similar result.
If $ D $ is a $ k $-edge-connected finite digraph, then for all $ s_1,t_1,\dots ,s_k,t_k\in V(D) $ there are pairwise edge-disjoint paths 
$P_1,\dots ,P_k $ such that $ P_i $ is an $ s_i\rightarrow t_i $ path. This fact is implied by the following Theorem of W. Mader as well 
as the 
(strong form of) Edmonds' Branching theorem (see \cite{frank2011connections} p. $ 349 $ Theorem 
$ 10.2.1 $).

\begin{tetel}[W. Mader \cite{mader1981property}]
Let $ D=(V,A) $ be a $ k+1 $-edge-connected, finite digraph and $ s,t\in V $. Then there is an $ s\rightarrow t $ path $ P $ 
such that $ 
(V,A\setminus A(P)) $ is $ k $-edge-connected.
\end{tetel}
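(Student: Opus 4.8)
The plan is to work with the cut function $\rho_D$: for $\emptyset\neq X\subsetneq V$ let $\rho_D(X)$ be the number of edges of $D$ with tail in $V\setminus X$ and head in $X$, so that a digraph is $m$-edge-connected exactly when $\rho_D(X)\ge m$ for every proper nonempty $X$. Write $D_P$ for $D$ with the edges of an $s\rightarrow t$ path $P$ deleted, and let $e_P(X)$ count the edges of $P$ entering $X$; then $\rho_{D_P}(X)=\rho_D(X)-e_P(X)$, so the theorem is exactly the assertion that some $s\rightarrow t$ path $P$ has $e_P(X)\le\rho_D(X)-k$ for all proper nonempty $X$. Two observations frame this: a simple path always has $e_P(X)\le\rho_D(X)$, and it meets the required bound on every \emph{tight} cut (i.e.\ one with $\rho_D(X)=k+1$) as soon as $e_P(X)\le 1$ there; moreover, for a tight $X$ with $t\in X$ and $s\notin X$ every $s\rightarrow t$ path has $e_P(X)\ge 1$, so on such a cut the genuine requirement is $e_P(X)=1$ — $P$ must never leave $X$ after it has entered it. (If $s=t$ the empty path works, so assume $s\neq t$.)

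First I would reduce to a minimal configuration, by induction on $|A|$. If some edge $f$ has $D-f$ still $(k+1)$-edge-connected, then the inductive hypothesis applied to $D-f$ gives an $s\rightarrow t$ path $P$, disjoint from $f$, with $(D-f)_P$ being $k$-edge-connected; since $D_P=(D-f)_P+f$, the digraph $D_P$ is $k$-edge-connected too. Hence we may assume $D$ is minimally $(k+1)$-edge-connected, so that every edge lies on the in-boundary of some tight cut. Here it is also worth recording, via submodularity of $\rho_D$, that the tight cuts $X$ with $t\in X\not\ni s$ are closed under $\cap$ and $\cup$, hence form a lattice with a least and a greatest element (and symmetrically around $s$).

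The core step is to choose $P$ extremally — among all $s\rightarrow t$ paths, take one that crosses the tight cuts as rarely as possible (minimizing $\sum_X \binom{e_P(X)}{2}$ over the finite family of tight cuts, a convex penalty on repeated crossings), and, subject to that, of least length — and then to prove that this $P$ crosses \emph{every} cut $X$ at most $\rho_D(X)-k$ times, which by the first paragraph is the theorem. Suppose not: there is a cut $Y$ with $\rho_{D_P}(Y)\le k-1$, and since $\rho_D(Y)\ge k+1$ this forces $e_P(Y)\ge 2$, so $P$ enters $Y$, leaves it, and re-enters. One then wants to reroute the stretch of $P$ running between its first and last entry into $Y$ so that it wastes none of its crossings of $Y$, obtaining a strictly better $s\rightarrow t$ path and contradicting the choice of $P$. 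The lattice structure of the tight cuts — together with minimality of $D$, which supplies the ``room'' needed for such detours — is what should make this rerouting possible and let one check that it does not damage the other cuts.

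I expect this rerouting inside the extremal argument to be the genuine obstacle. The naive move — ``each time $P$ leaves $Y$ and comes back, shortcut it to stay inside $Y$'' — is not available in general, because $D[Y]$ need not be connected, so the detour among vertices of $Y$ may simply fail to exist; and even when some detour does exist, it can cross smaller cuts more often, so the quantity being minimized might actually increase. Making the exchange legitimate forces one to route the excursion of $P$ across a carefully chosen \emph{smaller} cut from the laminar family and then close the argument by minimality; this bookkeeping over the nested cuts is the technical heart of Mader's proof.
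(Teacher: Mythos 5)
First, a point of comparison: the paper does not prove this statement at all. It is Mader's theorem, quoted as background with a citation to \cite{mader1981property}, so there is no in-paper proof to measure your attempt against; it has to stand on its own as a proof of a known but genuinely hard result.

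As it stands it does not: there is a real gap, and you have correctly located it yourself. Everything up to the extremal choice of $P$ is sound and standard --- the identity $\rho_{D_P}(X)=\rho_D(X)-e_P(X)$, the reformulation $e_P(X)\le\rho_D(X)-k$, the reduction to a minimally $(k{+}1)$-edge-connected digraph, and the uncrossing of tight sets separating $t$ from $s$. But the entire content of Mader's theorem is the exchange step you defer: showing that an extremal path cannot doubly cross a violated cut. Your own discussion explains why the naive shortcut fails ($D[Y]$ need not be connected; a detour may increase the penalty on other cuts), and you then assert that the lattice structure ``should make this rerouting possible'' without exhibiting the reroute or verifying that it improves the extremal quantity. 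That is precisely the step that cannot be waved at. There is also a flaw in the setup itself: you minimize $\sum_X\binom{e_P(X)}{2}$ over \emph{tight} cuts only, but the cut $Y$ witnessing failure satisfies merely $e_P(Y)\ge \rho_D(Y)-k+1\ge 2$ and need not be tight, so a violated cut can contribute nothing to your penalty and the extremal choice gives no leverage against it. You would need either to penalize the excess $e_P(X)-(\rho_D(X)-k)$ over all cuts, or to prove that a violated cut forces a violated tight cut --- neither is done. So the proposal is an accurate map of where the difficulty lies, not a proof.
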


 We will show that in the infinite case there is no $ k\in \mathbb{N} $ such that $ k $-edge-connectivity guarantees even 
the existence of edge-disjoint $ s_1 \rightarrow t_1 $ and $ s_2 \rightarrow t_2 $ paths for all $ s_1, t_1, s_2, t_2 $ vertices. Not 
even in the special case where the two ordered vertex pair is the reverse of each other.

\section{Main result}

\begin{tetel}\label{result}
For all $ k\in \mathbb{N} $  there exists a $ k $-edge-connected digraph without back and forth edge-disjoint 
paths between a certain vertex pair. 
\end{tetel}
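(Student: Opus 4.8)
The plan is first to dispose of the small cases and then to attack $k\ge 2$ with an explicitly self-similar, infinitely nested construction.

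\smallskip
\emph{Small $k$.} For $k\le 1$ a \emph{finite} example already works: take $D$ with $V(D)=\{s,t,x,y\}$ and $A(D)=\{(s,x),(t,x),(x,y),(y,s),(y,t)\}$. It is strongly connected, but from $s$ every out-arc leads to $x$ and from $x$ the only out-arc leads to $y$, so the unique $s\rightarrow t$ path is $s,x,y,t$ and the unique $t\rightarrow s$ path is $t,x,y,s$; both use the arc $(x,y)$, hence no edge-disjoint pair exists. On the other hand, for $k\ge 2$ no finite example can exist: given a $2$-edge-connected finite digraph, the theorem of W. Mader quoted above produces an $s\rightarrow t$ path $P$ with $(V,A\setminus A(P))$ still strongly connected, which therefore contains a $t\rightarrow s$ path disjoint from $P$. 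So for $k\ge 2$ the graph must be infinite, and this is exactly where the ``self-similar'' idea is needed.

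\smallskip
\emph{Construction for general $k$.} I would fix a single \emph{finite} building block $B$ with two distinguished vertices $s_B,t_B$ and a distinguished sub-digraph $\sigma$ (a ``slot''), designed so that: (i) every directed cut of $B$ that does not separate $\sigma$ from the rest has at least $k$ arcs, whereas $\sigma$ itself is ``$k$-deficient''; and (ii) from the routing viewpoint $B$ imitates the $k=1$ gadget above, with $\sigma$ in the role of the forced arc $(x,y)$ --- every walk that enters the $s_B,t_B$-region and leaves it again is forced either across $\sigma$ or through one fixed narrow waist. Then set $D_0:=B$, and let $D_{n+1}$ arise from $D_n$ by substituting, inside the (then) innermost slot, a fresh disjoint copy of $B$ along the slot's interface; put $D:=\bigcup_n D_n$ with $s,t$ the distinguished vertices of the outermost copy. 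Equivalently, $D$ is the digraph whose vertices are finite ``addresses'' recording the descent through the copies, with one copy of the local rule of $B$ glued in at each address. The point is that this nesting never terminates, so $D$ has no ``bottom'' copy, and the would-be-deficient slot is, in $D$, always refined into another copy of $B$.

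\smallskip
\emph{$k$-edge-connectivity.} If $F\subseteq A(D)$ with $|F|<k$ disconnected $D$, then $F$, being finite, lies inside finitely many of the nested copies, say those of depth $\le N$; every vertex of depth $>N$ stays strongly connected to the interface of the deepest touched copy by applying (i) level by level, and one can likewise ``climb out'' to the outermost copy. Hence a directed cut of size $<k$ not separating any slot would already occur within the first $N$ copies, contradicting (i). This argument is routine, but it forces several of the design constraints on $B$.

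\smallskip
\emph{No back-and-forth edge-disjoint paths.} This is the crux, and the part I expect to be hard. Let $P$ be an $s\rightarrow t$ path and $Q$ a $t\rightarrow s$ path; both are finite, so $P\cup Q$ meets only finitely many of the nested copies. If it meets no copy beyond the outermost, $P$ and $Q$ are paths in $B$ with $\sigma$ unused, and one argues by hand --- exactly as for the $k=1$ gadget --- that they share an arc. Otherwise choose a copy $C$ met by $P\cup Q$ that is $\subseteq$-minimal with this property; then inside $C$ neither path descends into $C$'s own slot, so the parts of $P$ and of $Q$ lying in $C$ are systems of subpaths of the \emph{finite} digraph obtained from $C$ by deleting its slot, running among $s_C,t_C$ and the interface vertices of that slot, and property (ii) is precisely what forces one subpath of $P$ and one subpath of $Q$ to share an arc --- a finite case check over the few ways the two paths can enter and leave the $s_C,t_C$-region and the slot. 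I expect the two real obstacles to be: engineering one finite block $B$ for which (i) and (ii) hold \emph{simultaneously} while the substitution still ``closes up'' (the slot's interface must coincide with the interface a fresh copy of $B$ presents); and the bookkeeping in the last step, since a path may enter and leave a copy several times, interface arcs must be tracked carefully, and $P$ and $Q$ may descend to different depths, so that the minimal common copy need not contain all of either path.
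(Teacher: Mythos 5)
Your high-level strategy --- an infinitely nested self-similar digraph, playing the finiteness of any candidate cut or path pair off against the unbounded depth of the nesting --- is exactly the paper's strategy, and your connectivity argument (a cut of size $<k$ would have to lie inside every copy of an infinite descending chain, which is impossible) is essentially the proof of Lemma \ref{k-edge-connected}. But the theorem lives or dies with the finite building block, and you have not produced one: you explicitly defer ``engineering one finite block $B$ for which (i) and (ii) hold simultaneously'', and that is precisely the content of the construction. Worse, the design hint you give points the wrong way. You put a single slot $\sigma$ ``in the role of the forced arc $(x,y)$'' of your $k=1$ gadget; but in that gadget the $s\rightarrow t$ path and the $t\rightarrow s$ path traverse $(x,y)$ in the \emph{same} direction. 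If both paths are forced across the slot from its entry to its exit, the recursive step would have to show that the nested copy admits no two edge-disjoint paths from one fixed vertex to another in the same direction --- and by Menger's theorem every $k$-edge-connected digraph with $k\geq 2$ has $k$ such paths, so no block with property (ii) understood this way can exist. The recursion only closes up if some nested copy is forced to be traversed in \emph{opposite} directions by the two paths, so that its isomorphism with the whole digraph reproduces the original back-and-forth problem one level down.

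This is exactly where the paper's block is delicate: each level contains not one slot but $2k$ copies $V_0,\dots,V_{2k-1}$, arranged in two chains joined by $k$ parallel arcs in both directions together with a single directed zig-zag $s,t_0,s_1,t_2,\dots,s_{2k-1},t$; which copy gets traversed both ways is not fixed in advance but depends on the particular pair of paths, and is located by a cut analysis (the set $\{t\}\cup\bigcup_{i\in I_e}V_i$ has exactly the $k$ outgoing arcs $(t_i,s_{i+1})$, and so on). Your termination argument is also shakier than the paper's: in a $\subseteq$-minimal copy $C$ met by $P\cup Q$, neither path need run from $s_C$ to $t_C$ --- each may dip in and out of $C$ several times through interface arcs --- so property (ii), which speaks of walks through the $s_B,t_B$-region, does not directly apply there. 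The paper instead takes a counterexample pair of minimal total length, shows $P_{t,s}$ contains an $s_m\rightarrow t_m$ subpath and $P_{s,t}$ a $t_m\rightarrow s_m$ subpath of some common copy $V_m$, and pulls these back through the isomorphism $f_m$ to a strictly shorter counterexample. As it stands your proposal is a plausible program, not a proof: the small-case gadget and the impossibility of a finite example for $k\geq 2$ are fine, but the construction and the crux of the disjointness argument are missing.
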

\begin{proof}

\noindent Let  $ k\geq2 $ be fixed, $ I=\{ 0,\dots,2k-1 \}$, $ I_e=\{ i\in I: i\text{ is even } \} $, $ I_o=I\setminus I_e $. 
Denote by $ I^{*} $ the set of finite sequences from $ I $. Let the vertex set $ V $ of the digraph  is the union of the 
disjoint sets 
 $  \{  s_\mu :  \mu\in I^{*} \} $  ( we mean $ s_\mu=s_\nu $ iff $ \mu=\nu $) and $ \{  t_\mu :  \mu\in I^{*} \} $ ( $ 
t_\mu=t_\nu $ iff $ \mu=\nu $).  If $ \mu $ is the empty sequence we write simply $ s,t $ and we denote the 
 concatenation of sequences by writing them successively. For $ \nu\in I^{*} $ let denote the set $ \{ r_{\nu\mu}: 
 r\in \{ s,t \},\ \mu\in I^{*} \}\subseteq V $ by $  V_\nu $. The edge-set $ A $  of the digraph consists of the following edges.
For all $ \mu \in I^{*} $ there are $ k $ edges in both directions  between the two elements of the following pairs: $\{ s_\mu, t_{\mu1} 
\},\   
\{ s_{\mu  i}, t_{\mu  (i+2)}\}\ (i=0,\dots,2k-3),\   \{ s_{\mu(2k-2)}, t_\mu  \}   $. Simple directed edges are $ (s_\mu,  
t_{\mu0} ), (t_{\mu i}, s_{\mu (i+1)} )_{i\in I_e}, (s_{\mu i},t_{\mu (i+1)})_{i\in I_o\setminus \{ 2k-1 \}},$  
$(s_{\mu(2k-1)},t_\mu)\ $ for all $ \mu\in I^{*} $. Finally $ D\defeq (V,A) $ (see figure \ref{önhas k=3}). 

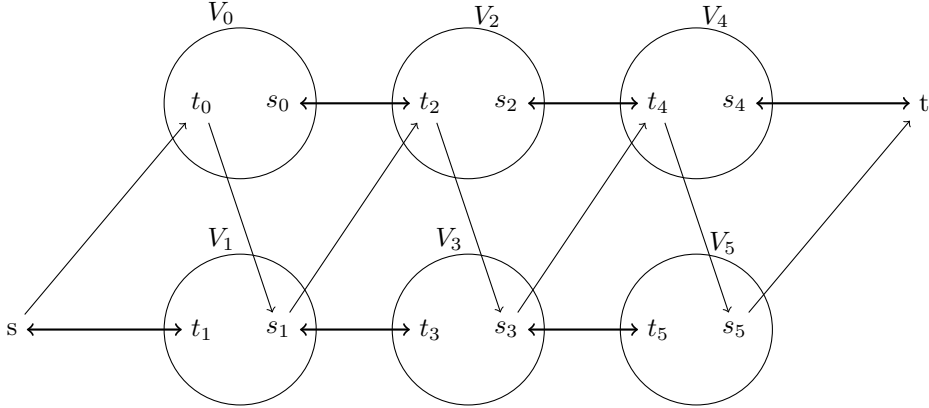
\begin{figure}[h]
\centering

\begin{tikzpicture}
\node (s)  at (-6,0) {s};
\node (t_1) at (-3.5,0) {$t_1$};

\node (s_1) at (-2.5,0) {$s_1$};
\node (t_3) at (-0.5,0) {$t_3$};

\node (s_3) at (0.5,0) {$s_3$};
\node (t_5) at (2.5,0) {$t_5$};

\node (s_5) at (3.5,0) {$s_5$};
\node (t_0) at (-3.5,3) {$t_0$};

\node (s_0) at (-2.5,3) {$s_0$};
\node (t_2) at (-0.5,3) {$t_2$};

\node (s_2) at (0.5,3) {$s_2$};
\node (t_4) at (2.5,3) {$t_4$};

\node (s_4) at (3.5,3) {$s_4$};
\node (t) at (6,3) {t};
\draw  (s) edge[<->,thick] (t_1);
\draw  (s_1) edge[<->,thick] (t_3);
\draw  (s_3) edge[<->,thick] (t_5);
\draw  (s_0) edge[<->,thick] (t_2);
\draw  (s_2) edge[<->,thick] (t_4);
\draw  (s_4) edge[<->,thick] (t);
\draw  (s) edge[->] (t_0);
\draw  (t_0) edge[->] (s_1);
\draw  (s_1) edge[->] (t_2);
\draw  (t_2) edge[->] (s_3);
\draw  (s_3) edge[->] (t_4);
\draw  (t_4) edge[->] (s_5);
\draw  (s_5) edge[->] (t);
\draw  (-3,0) node {} circle (1);
\draw  (0,0) node {} circle (1);
\draw  (3,0) node {} circle (1);
\draw  (-3,3) node {} circle (1);
\draw  (0,3) node {} circle (1);
\draw  (3,3) node {} circle (1);
\node [outer sep=0,inner sep=0,minimum size=0]  at (-0.25,1.20) {$\Huge{V_3}$};
\node [outer sep=0,inner sep=0,minimum size=0]  at (3.35,1.15) {$\Huge{V_5}$};
\node [outer sep=0,inner sep=0,minimum size=0]  at (-3.25,1.20) {$\Huge{V_1}$};
\node [outer sep=0,inner sep=0,minimum size=0]  at (-3.25,4.20) {$\Huge{V_0}$};
\node [outer sep=0,inner sep=0,minimum size=0]  at (0.25,4.15) {$\Huge{V_2}$};
\node [outer sep=0,inner sep=0,minimum size=0]  at (3.25,4.15) {$\Huge{V_4}$};
\end{tikzpicture}
\caption{The digraph $ D $  in the case $ k=3 $. Thick, 
two-headed arrows stand for $ k $ parallel edges in both directions. The (just partially drawn) $ D[V_i] $'s are isomorphic 
to the whole $ D $ by 
Proposition \ref{isomorph}.} \label{önhas k=3}
\end{figure}
\begin{megj}
One can avoid using parallel edges (without losing the desired properties of the digraph) by dividing each of these edges 
with one-one new vertex and drawing between them $ k (k-1) $-many new directed edges, one-one for each ordered pair. One can also 
achieve $ k $-connectivity instead of $ k $-edge-connectivity by using some similarly easy modification. 
\end{megj}

\begin{all}\label{isomorph}
  For $ \nu\in I^{*} $ the function $ 
 f_\nu:V\rightarrow V_\nu,\  f_\nu(r_\mu)\defeq r_{\nu\mu}\ (r\in \{ s,t \}) $ is an isomorphism between $ D $ 
and $ D[V_\nu] $. 
\end{all}
\begin{sbiz}
It is a direct consequence of the definition of the edges since the number of edges from $ r_\mu $ to $ r'_{\mu'} $ are the 
same as from $ r_{\nu \mu} $ to $ r'_{\nu 
\mu'}  $ for all $ r,r'\in \{ s,t \},\ \nu,\mu,\mu'\in I^{*} $.
\end{sbiz}

\begin{all}\label{Dv}
Denote by $ D_v $ the digraph that we obtain from $ D $ by contracting for all $ i\in I $ the set $ V_i $  to a vertex $ v_i $. Then $ 
D_v $ 
is $ k $-edge-connected.
\end{all}
\begin{sbiz}
\noindent In the vertex-sequence $ s,v_1, v_3,\dots,v_{2k-1} $ there are $ k $ edges in both directions between 
the 
neighboring vertices such as in the 
sequence $ v_0,v_2,\dots,v_{2k-2},t $. Finally there are in both directions at least $ k $ edges between the vertex sets of the 
sequences above.
\end{sbiz}\\

For $ u\neq v $ we denote by  $ \lambda(u,v) $  the local edge-connectivity from $ u $ to $ v $ in $ D $ (i.e. 
$\lambda(u,v)= 
\min \{ 
\left|A'\right|: 
A'\subseteq A,\text{ there is no path from }
u\text{ to }v\text{ in }(V,A\setminus A') \} $) and let $ \lambda \{ 
u,v \}\defeq \min \{ 
\lambda(u,v), \lambda(v,u) \} $.
\begin{all}\label{D connected}
$ D $ is  connected.
\end{all}
\begin{sbiz}
We will show that $ \lambda\{ s,r_{\mu} \}\geq 1 $ for all $ r\in \{ s,t \},\ \mu\in I^{*} $. We will use induction on length of 
$ \mu $ (which is denoted by $ \left|\mu\right| $). Consider first the $ \left|\mu\right|=0,1 $ cases directly.

The path $ 
s,t_0,s_1,t_2,s_3,\dots, t_{2k-2},s_{2k-1},t $ shows that $ \lambda(s,t)\geq 1 $. Using the isomorphism $ f_i $  (see 
Proposition \ref{isomorph}) we may fix 
an $ s_i \rightarrow t_i $ path $ P_{s_i,t_i} $ in $ D[V_i] $ for  all $ i\in I $. The path 
 \begin{align*}
 &  t,P_{s_{2k-2},t_{2k-2}},\dots, P_{s_{2k-2j},t_{2k-2j}},\dots, P_{s_{0},t_{0}},P_{s_{1},t_{1}},s\\
 \end{align*} justifies that $ \lambda(t,s)\geq 1 $ (thus $ \lambda\{ s, t \}\geq 1 $). Then we may fix 
 a $ t_i \rightarrow s_i $ path $ P_{t_i,s_i} $ in $ D[V_i] \ (i\in I) $. The paths  \begin{align*}
& s,P_{t_1,s_1},P_{t_3,s_3},\dots,P_{t_{2j+1},s_{2j+1}},\dots,  P_{t_{2k-1},s_{2k-1}}\\   
& P_{s_{2k-1},t_{2k-1}}, P_{s_{2k-3},t_{2k-3}},\dots,P_{s_{2k-1-2j},t_{2k-1-2j}},\dots, P_{s_1,t_1},s
 \end{align*}   certify that $ \lambda \{ 
 s,r_i \}\geq 1 $ if $ r\in \{ s,t \},\ i\in I_o $. The paths  
 \begin{align*}
& t,P_{s_{2k-2},t_{2k-2}},P_{s_{2k-4},t_{2k-4}},\dots,P_{s_{2k-2-2j},t_{2k-2-2j}}\dots, P_{s_{0},t_{0}}\\  & P_{t_{0},s_{0}}, 
 P_{t_{2},s_{2}},\dots,P_{t_{2j},s_{2j}},\dots,P_{t_{2k-2},s_{2k-2}},t
 \end{align*}   certify that $ \lambda \{ 
 t,r_i \}\geq 1 $ if $ r\in 
 \{ 
  s,t \}\geq 1,\ i\in I_e $ and thus (by $ \lambda\{ s, t \}\geq 1 $ and  by  transitivity) $ \lambda \{ s,r_i \}\geq 1 $ if $ 
  r\in \{ s,t \},\ i\in I_e 
  $. Hence the cases $\mu \in I*$ with $|\mu | \leq 1$ are settled. 
  
Let be $ l\geq 1 $ and suppose  $ \lambda\{ s,r_\mu \}\geq 1 $ if $r\in \{ s,t \},\ \mu\in 
I^{*},\ 
\left|\mu\right|\leq l   $. Let  $ \nu=\mu i $, where $ i\in I $ and $ \left|\mu\right| =l$. By the induction hypothesis we 
have $ \lambda\{ s,s_\mu \}\geq 1 $.  By the 
induction hypothesis for $ l=1 $ we have $ \lambda\{ s, r_i \}\geq 1 $ and so $ \lambda\{ s_\mu, r_{\mu i } \}\geq 1 $ by the 
isomorphism $ f_\mu $. Combining these, we get  $ \lambda \{ 
s,r_{\mu i} \}\geq 1 $.
\end{sbiz}
 
\begin{lem}\label{k-edge-connected}
$ D $ is $ k $-edge-connected.
\end{lem}
\begin{biz}
Let  $ k>l\geq 1 $.
\begin{all}\label{oroklod}
Let $ \mu \in I^{*} $ arbitrary. If we delete at most $ l $ edges of the digraph $ D[V_\mu] $ in such a way that its subgraphs $ 
D[V_{\mu 
i}]\ (i\in I) $ remain connected after the deletion, then $ D[V_\mu] $ also remains connected after the deletion.
\end{all}

\begin{sbiz}
Because the isomorphism $ f_{\mu} $ it is enough to deal with the case where $\mu  $ is the empty sequence.
Denote by $ D' $ the digraph that we have after the deletion. Let $ D'_v $ be the digraph that we get from $ D' $ by contracting 
the sets $ V_i 
$  to a vertex $ v_i $ for all $ i\in I $.  The digraphs $ D'[V_i]\ (i\in I) $ are connected by assumption, thus $ D' $ is connected iff 
$ 
D'_v $ is connected. The digraph $ D'_v $ arises by deleting at most $ l<k $ edges of the $ k $-edge-connected digraph $ D_v $ 
(see 
Proposition \ref{Dv}) hence it is connected.
\end{sbiz}\\

 We will prove that if $ D $ is $ l $-edge-connected, then it is also $ l+1 $ edge-connected. This 
is enough since we have already proved  $ 1 $-connectivity of $ D $ in Proposition  \ref{D connected}.
Assume that $ D $ is $ l $-edge-connected. Let $ C\subseteq A,\ \left|C\right|=l $ arbitrary and $ D'\defeq 
(V,A\setminus C) $. By the definition of $ l+1 $-edge connectivity we need to show that $ D' $ is connected.
Suppose for contradiction that it is not. Since the connectivity of the subgraphs $ D'[V_i]\ (i\in I) $ implies the connectivity of $ 
D' $ (by Proposition \ref{oroklod})  there is an $ i_0\in I $ such that $ D'[V_{i_0}] $ is not connected. Since the connectivity 
of the subgraphs $ D'[V_{i_0 i}]\ (i\in I) $ implies the connectivity of $ D'[V_{i_0}] $  there is an $ 
i_1\in I $ such that $ D'[V_{i_0 i_1}] $ is not connected$ \dots $ By recursion we obtain an infinite sequence $ (i_n)_{n\in 
\mathbb{N}} $ 
such that the digraphs $ D'[V_{i_0\dots i_n}]\ (n\in \mathbb{N}) $  are all disconnected. Note that the digraphs $ 
D[V_{i_0\dots i_n}]\ (n\in \mathbb{N}) $ are $ l $-connected because $ 
D $  is $ l $-connected by assumption and they are isomorphic to it, hence necessarily $ C \subseteq \mathsf{span}(V_{i_0\dots i_n}) 
$ 
for all $ 
n\in \mathbb{N} $. But then
\[ C\subseteq \bigcap_{n=0}^{\infty} \mathsf{span}(V_{i_0 \dots  i_n})=\mathsf{span}\left( \bigcap_{n=0}^{\infty}  V_{i_0\dots 
i_n}\right)=\mathsf{span}(\varnothing)=\varnothing\]
which is a contradiction since $ \left| C\right|= l\geq 1 $.
\end{biz}
\begin{lem}\label{no pair-connected}
There are no edge-disjoint back and forth paths between $ s $ and $ t $ in $ D $.
\end{lem}
\begin{biz}
Suppose, seeking a contradiction, that there are. Let $P_{s,t} $ be an $ s \rightarrow t $ path and $ P_{t,s} $ be a $ t 
\rightarrow 
s$ 
path such that they are edge-disjoint and have a minimal sum of lengths among these path pairs. For $ u,v\in V $ call a set $ 
U\subseteq V $ an  $ uv $-cut iff $ u\in U 
$ and $ v\notin U $. The set $ \{ t \}\cup \bigcup \{ V_i: i\in I_e \} $ is a $ ts $-cut and its outgoing edges are $ \{ 
(t_i,s_{i+1}) 
\}_{i\in I_e} $. Let $i_0\in I_e $ be the maximal index such that $ P_{t,s} $ uses the edge $ (t_{i_0}s_{i_0+1}) $. Then an 
initial 
segment of $ 
P_{t,s} $ is necessarily of the form $ t,P_{s_{2k-2},t_{2k-2}},P_{s_{2k-4},t_{2k-4}},\dots,P_{s_{i_0},t_{i_0}},s_{i_0+1} $ where 
$ 
P_{s_i,t_i} $ is an $ s_i\rightarrow t_i $ path in $ D[V_i] $. The set 
$T\defeq \{ t \}\cup \bigcup \{ V_i : i_0\leq i\in I \}  $ is also a $ ts $-cut and all the tails of its outgoing 
edges 
are 
in $ \{ t_{i_0},t_{i_0+1} \} $. $ P_{t,s} $ has already used the edge $ (t_{i_0},s_{i_0+1}) $ so it may not use 
another 
edge with tail $ t_{i_0} $ hence $ P_{t,s} $ leave $ T $ using an edge with tail $ t_{i_0+1} $. But then $ 
P_{t,s} $ 
contains 
an $s_{i_0+1}\rightarrow t_{i_0+1} $ subpath $ P_{s_{i_0+1},t_{i_0+1}} $ in $ D[V_{i_0+1}] $.\\
$S\defeq\{ s \}\cup \bigcup \{V_i: i_0+1\geq i\in I \} $ is an $ st $-cut and all the tails of its outgoing edges are 
in 
$ \{ s_{i_0},s_{i_0+1} \} $. Therefore $ P_{s,t} $ has an initial segment  in $ D[S] $ that terminates in this set. We know that $ 
P_{s,t} $ 
does not use the edge $ (t_{i_0},s_{i_0+1}) $ because $ P_{t,s} $ has already used it. Therefore there is an $ m\in \{ i_0,i_0+1 \} $ 
such 
that $ P_{s,t} $ has a  $ t_m\rightarrow s_m $ subpath $ P_{t_m,s_m} $ in $ D[V_m] $. But then the paths $ P_{t_m, s_m} $ and 
$ P_{s_m,t_m} $ are proper subpaths of $ 
P_{s,t} $ and $ P_{t,s} $ respectively. By Proposition \ref{isomorph}  $ f_m $ is an isomorphism  between $ D $ and 
$D[V_m] $ and thus the 
inverse-images of the paths $ P_{t_m, s_m} $ and $ P_{s_m,t_m} $ are edge-disjoint back and forth paths between $ s 
$ and $ t $ 
with strictly less sum of lengths than the added length of paths $ P_{s,t}$ and $P_{t,s} $, which contradicts with the choice 
of $ P_{s,t} $ and $ P_{t,s} $.\end{biz}\\

\end{proof}

\end{document}